\titleformat*{\section}{\large\bfseries}
\theoremstyle{definition}
\newtheorem{theorem}{Theorem}[section]
\newtheorem{proposition}[theorem]{Proposition}
\newtheorem{remark}[theorem]{Remark}
\numberwithin{equation}{section}
\begin{document}
  \title{\Large{Generalized Hardy's identity \\for the astroid-type $p$-circle lattice point problem}}
  \author{Masaya Kitajima}
  \date{}
  \maketitle
  \begin{abstract}
  Let $r$ be a positive real number and $p$ satisfy $(2/p)\in\mathbb{N}$. Then, we consider the lattice point problem of the closed curves \textit{astroid-type $p$-circle} $\{x\in\mathbb{R}^{2}|\ |x_{1}|^{p}+|x_{2}|^{p}=r^{p}\}$ which generalize the circle. In investigating the asymptotic behavior of the error term in the area approximation of the circle, G.H. Hardy conjectured an infimum for the evaluation in 1917. One of the grounds for this conjecture is \textit{the Hardy's identity}, which is a series representation of the term, consisting of the Bessel function of order one and a certain number-theoretic function. \par
  In order to investigate an infimum in the error evaluation of the astroid-type $p$-circle, which is unknown in previous studies, in this paper, we derive generalized Hardy's identity for the figures by using generalized Bessel functions. Furthermore, the differential formula for the functions, which is important for the proof of this identity, is closely related to \textit{the Erd\'{e}lyi-Kober operator}, and this formula and operator are expected to be useful in our future research. \\
\textbf{Keywords:} Lattice point problem, Lam\'{e}'s curves, Bessel function, Fourier series and coefficients in several variables, Erd\'{e}lyi-Kober operator, Fractional derivatives and integrals.\\
\textbf{2020 Mathematics Subject Classification:} 11P21, 42B05, 33C10, 26A33.
\\
  \end{abstract}
  
  \section{Introduction and main results}
  \hspace{13pt}For positive real numbers $p$ and $r$, we call the closed curve defined by $\{x\in\mathbb{R}^{2}|\ |x_{1}|^{p}+|x_{2}|^{p}=r^{p}\}$ \textit{$p$-circle} of radius $r$ (\textit{Lam\'{e}'s curve}, or \textit{supper ellipse}), which is a generalization of the circle (see Figure 1). Then, focusing on the lattice points inside the $p$-circle, we consider the error between the area of the $p$-circle itself and the area of the approximated mosaic (the number of lattice points inside the $p$-circle) 
  \begin{equation}\label{P_p}
  P_{p}(r):=N_{p}(r)-\frac{2}{p}\frac{\Gamma(\frac{1}{p})^{2}}{\Gamma(\frac{2}{p})}r^{2}
  \end{equation}
  by approximating the closed curve as a mosaic, as shown in the right side of Figure 1. Then, we pose the lattice point problem of $p$-circle as the problem of finding a value $\alpha_{p}$ such that $P_{p}(r)=\mathcal{O}(r^{\alpha_{p}})$ and $P_{p}(r)=\Omega(r^{\alpha_{p}})$ are satisfied, which has been studied for a long time. Here, $N_{p}$ is the number of lattice points in the $p$-circle, $\Gamma$ is the gamma function, and, for the functions $f$ and $g$, $f(t)=\mathcal{O}(g(t))$ (resp. $f(t)=\Omega(g(t))$) mean $\limsup_{t\to\infty}|\frac{f(t)}{g(t)}|<+\infty$ (resp. $\liminf_{t\to\infty}|\frac{f(t)}{g(t)}|>0$). \par\vspace{5pt}
  
  In particular, the case $p=2$ (that is, when the closed curve is a circle) is called the Gauss' circle problem\cite{Gauss}, and in 1917, G.H. Hardy\cite{Hardy-1917} conjectured that an infimum of $\mathcal{O}$ evaluation for $P_{2}$ is $\frac{1}{2}$-order (\textit{Hardy's conjecture}), based on his previous results\cite{Hardy-1915} which established $P_{2}(r)\neq\mathcal{O}(r^{\frac{1}{2}})$, $P_{2}(r)=\Omega(r^{\frac{1}{2}}(\log r)^{1/4})$ and \textit{Hardy's identity} (for example, see also \cite{Kratzel}, Theorem 3.12)
  \begin{equation}\label{HI}
  \hspace{-160pt}P_{2}(r)=r\sum_{k=1}^{\infty}\frac{R(k)}{k^{\frac{1}{2}}}J_{1}(2\pi k^{\frac{1}{2}}r),
 \end{equation}\vspace{-10pt}
 \begin{equation*}
  \hspace{120pt}\text{with}\text{ $J_{\omega}$: the Bessel function of order }\omega,\text{ $R(k):=\#\{n\in\mathbb{Z}^{2}|\ |n|^{2}=k\}$}.
\end{equation*}\par
  Since then, many mathematicians have improved the evaluation (the latest result was obtained by M.N. Huxley \cite{Huxley-2003} in 2003 ($P_{2}(r)=\mathcal{O}(r^{\frac{1}{2}+\varepsilon}$) for $\varepsilon>\frac{27}{208}$), but this problem has not been completely solved yet.
  
  \begin{figure}[t]
    \centering
    \includegraphics[width=0.8\linewidth]{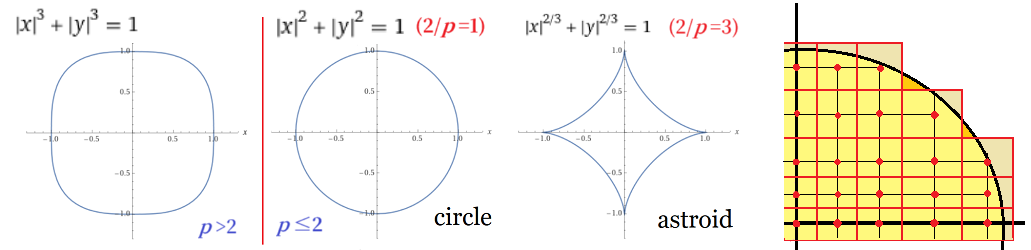}
    \label{fig}
    \caption{Examples of the $p$-circle and the approximation by unit squares.}
  \end{figure}
  \par\vspace{5pt}
  
  On the other hand, for the cases $p>2$, the following important theorem by E. Kr\"{a}tzel is given by the decomposition (\cite{Kratzel}, (3.57))
  \begin{equation*}
    P_{p}(r)=\Psi(r;p)+\Delta(r;p)
  \end{equation*}
  with the second main term $\Psi(r;p)$ (\cite{Kratzel}, (3.55)), which is represented as the series expansion consisting of the generalized Bessel functions (\cite{Kratzel}, Definition 3.3). It can be said that the application of the result in 1993 by G. Kuba\cite{Kuba} (the $\mathcal{O}$ estimate of the remainder term $\Delta(r;p)$) have solved the problem for the cases $p>\frac{73}{27}$.
  \begin{theorem}[\textit{\cite{Kratzel}, Theorem 3.17 A}]\label{upper}
    \itshape{Let $p>2$. If $\beta_{p}<1-\frac{1}{p}$ such that $\Delta(r;p)=\mathcal{O}(r^{\beta_{p}})$ exists, then $P_{p}(r)=\mathcal{O}(r^{1-\frac{1}{p}}),\Omega(r ^{1-\frac{1}{p}})$ holds.}
  \end{theorem}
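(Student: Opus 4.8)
The plan is to show that both estimates are governed entirely by the second main term $\Psi(r;p)$, after which the claim reduces to a statement about the exact order of a single series of generalized Bessel functions. By hypothesis there is a $\beta_p<1-\frac1p$ with $\Delta(r;p)=\mathcal{O}(r^{\beta_p})$, hence $\Delta(r;p)=o(r^{1-1/p})$. Feeding this into the decomposition $P_p(r)=\Psi(r;p)+\Delta(r;p)$ of \cite{Kratzel}, (3.57), I obtain $P_p(r)=\Psi(r;p)+o(r^{1-1/p})$, so that a contribution of exact order $r^{1-1/p}$ can be neither produced nor cancelled by $\Delta$. It therefore suffices to prove the two one-sided bounds $\Psi(r;p)=\mathcal{O}(r^{1-1/p})$ and $\Psi(r;p)=\Omega(r^{1-1/p})$; the corresponding statements for $P_p$ follow at once, the $\mathcal{O}$-part by the triangle inequality and the $\Omega$-part by evaluating along a sequence $r_n\to\infty$ on which $|\Psi(r_n)|\gg r_n^{1-1/p}$ and absorbing the $o(r_n^{1-1/p})$ remainder.

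For the upper bound I would analyse the series expansion of \cite{Kratzel}, (3.55), which represents $\Psi(r;p)$ through the generalized Bessel functions of \cite{Kratzel}, Definition 3.3. These functions arise from a degenerate stationary-phase analysis at the four boundary points $(\pm r,0),(0,\pm r)$ of the $p$-circle, where for $p>2$ the curvature vanishes and the outer normal is parallel to a coordinate axis. Using their large-argument asymptotics, of the oscillatory type $c\,x^{-\gamma_p}\cos(x-\delta_p)+\mathcal{O}(x^{-\gamma_p-1})$ with $\gamma_p$ and $\delta_p$ determined by $p$, each summand of $\Psi(r;p)$ contributes an amplitude of size $r^{1-1/p}$ multiplied by a coefficient decaying in the summation index $n$ and a cosine of argument $\sim 2\pi n r$. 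Summation by parts, exploiting the oscillation of these cosines, then bounds the partial sums uniformly in $r$ by a constant multiple of $r^{1-1/p}$, yielding $\Psi(r;p)=\mathcal{O}(r^{1-1/p})$.

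For the $\Omega$-bound I would rule out $\Psi(r;p)=o(r^{1-1/p})$ by a second-moment argument. Writing $\Psi(r;p)=r^{1-1/p}h_p(r)+o(r^{1-1/p})$, where $h_p$ is the normalized oscillatory series extracted from the Bessel asymptotics, I would evaluate $\int_1^R|\Psi(r;p)|^2\,dr$ and show, using the mutual orthogonality of the oscillatory terms over long intervals, that it is asymptotic to a positive constant times $R^{2(1-1/p)+1}$. Were $\Psi(r;p)=o(r^{1-1/p})$, this integral would be $o(R^{2(1-1/p)+1})$, a contradiction; hence $\limsup_{r\to\infty}|\Psi(r;p)|/r^{1-1/p}>0$, which is the desired $\Omega(r^{1-1/p})$. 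Combined with the reduction, this establishes $P_p(r)=\mathcal{O}(r^{1-1/p}),\Omega(r^{1-1/p})$.

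The main obstacle is the rigorous handling of the oscillatory series defining $\Psi$. For the $\mathcal{O}$-bound the sum over $n$ is only conditionally convergent, so the summation by parts must be paired with asymptotic estimates for the generalized Bessel functions that are uniform in both $n$ and $r$, including control of the transition region where the argument $2\pi n r$ is not yet large. For the $\Omega$-bound the delicate point is to justify the termwise orthogonality and the convergence of the mean square, ensuring that the accumulated lower-order corrections from the Bessel expansions do not overwhelm the main quadratic term. Once these uniform estimates for the generalized Bessel functions are in place, the remainder of the argument is a routine consequence of the decomposition and the hypothesis on $\Delta(r;p)$.
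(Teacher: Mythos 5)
First, a point of orientation: the paper does not prove this theorem --- it is quoted from Kr\"atzel's book (Theorem 3.17 A there) purely as background, so there is no in-paper proof to compare against. Judged on its own terms, your reduction is the standard and correct one: since $\Delta(r;p)=\mathcal{O}(r^{\beta_p})=o(r^{1-\frac{1}{p}})$, both the $\mathcal{O}$- and $\Omega$-statements for $P_{p}$ follow from the corresponding statements for the second main term $\Psi(r;p)$, and the entire content of the theorem is that $\Psi(r;p)$ has exact order $r^{1-\frac{1}{p}}$.

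Where your sketch goes astray is in the analytic details, and this matters because those details \emph{are} the proof. Once the generalized Bessel functions in Kr\"atzel's (3.55) are replaced by their large-argument asymptotics, the $n$-th term of $\Psi(r;p)$ has amplitude of size $r^{1-\frac{1}{p}}n^{-1-\frac{1}{p}}$; the series is therefore absolutely convergent, the $\mathcal{O}$-bound follows from the triangle inequality alone, and no summation by parts or exploitation of cancellation among the cosines is needed --- your concern about conditional convergence misreads the structure of $\Psi$. Likewise, the $\Omega$-bound does not require a second-moment orthogonality computation: the phases have the form $2\pi nr-\delta_{p}$, so evaluating at integer radii $r=m$ aligns every phase, and $\Psi(m;p)$ is asymptotically a fixed nonzero constant times $m^{1-\frac{1}{p}}\sum_{n}n^{-1-\frac{1}{p}}$, which already gives $\limsup_{r\to\infty}|\Psi(r;p)|/r^{1-\frac{1}{p}}>0$. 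Your mean-square route could be made to work but is heavier than necessary, and --- as you concede in your final paragraph --- you have not actually established the uniform asymptotic estimates for the generalized Bessel functions on which both of your bounds rest. As written, the proposal has the right skeleton but defers exactly the load-bearing steps, and the two technical obstacles you flag are largely artifacts of not using the explicit form of the series (3.55).
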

  \par
  Now, among the remaining cases $0<p<2$, we specifically study the cases $p$ satisfying $\frac{2}{p}\in\mathbb{N}$ (we call the corresponding closed curve \textit{astroid-type $p$-circle}. See $\cite{K1},\cite{K2}$). For these cases, no previous study other than our trials has been done. Even if we apply the Kr\"{a}tzel's method established for the above-mentioned cases $p>2$ to these cases, singularities arise in the functions we mainly used, which makes the approach to the problem difficult. \par
  However, we would like to emphasize that another problem for lattice points has been widely studied, including the cases $0<p<2$, and that these cases are also considered interesting research subjects. For example, for a family of the $p$-ellipse $\{x\in\mathbb{R}^{2}|\ |sx_{1}|^{p}+|\frac{x_{2}}{s}|^{p}=r^{p}\}\ (s>0)$, the general form of the $p$-circle, there are problems to find $s$ such that the number of lattice points is the largest. For $p\neq1$, this problem has been solved by R.S. Laugesen et al. (see \cite{Laugesen1}\cite{Laugesen2}). \vspace{5pt}\par
  With this background, we approach the cases $p$ satisfying $\frac{2}{p}\in\mathbb{N}$ by using a new method. Considering the fact that the best estimate conjecture (Hardy's conjecture) of the circle problem is based on Hardy's identity (\ref{HI}), we will try to generalize it as a starting point. We focus on the paper \cite{Kuratsubo-2022} by S. Kuratsubo and E. Nakai in 2022, which gave a harmonic-analytic claim equivalent to Hardy's conjecture by using the Fourier transform of spherically symmetric functions and the Bessel function $J_{\omega}$ as an important key. Inspired by this paper for a circle, we generalized the Bessel functions on $\mathbb{R}^{2}$ (\cite{K1}, Definition 2.5)
  \begin{equation*}\label{I-Om}
      J_{\omega}^{[p]}(x):=
      \begin{cases}
        \frac{1}{\Gamma(\frac{1}{p})^{2}}\left(\frac{2}{p}
    \right)^{2}\int_{0}^{1}\cos(x_{1}t^{\frac{1}{p}})\cos(x_{2}(1-t)^{\frac{1}
    {p}})t^{\frac{1}{p}-1}(1-t)^{\frac{1}{p}-1}dt\qquad\text{if }\omega=0,\\
        \frac{|x|_{p}^{\omega}}{p^{\omega-1}\Gamma(\omega)}\int_{0}^{1}
        J_{0}^{[p]}(\tau x)\tau(1-\tau^{p})^{\omega-1}
        d\tau\qquad\text{if }\omega>0, \qquad\text{for }x\in\mathbb{R}^{2}
      \end{cases}
  \end{equation*}
  based on \textit{$p$-radial} (a generalization of spherical symmetry), in order to tackle the problem for the astroid-type $p$-circle. Note that $|\xi|_{p}(:=(|\xi_{1}|^{p}+|\xi_{2}|^{p})^{\frac{1}{p}})$ is $p$-norm on $\mathbb{R}^{2}$.\par\vspace{7pt}
  Now, as a main result in this paper, we obtained the following generalization of the Hardy's identity (\ref{HI}) for the astroid-type $p$-circle by using a function on $\mathbb{R}_{\geq0}$
    \begin{equation}\label{mathcalJ}
      \mathcal{J}_{\omega,\varphi}^{[p]}(r):=\frac{(\frac{2}{p})^{2}}{\Gamma(\frac{1}{p})^{2}}\sum_{k=0}^{\infty}\frac{p^{2k}(-1)^{k}}{\Gamma(\frac{2}{p}(k+1)+\omega)}\Bigl(\frac{r}{p}\Bigr)^{2k+\omega}\Bigl(\sum_{m\in\mathbb{N}_{0}^{2}\ |m|'=k}\frac{\Gamma(\frac{2m+1}{p})}{(2m)!}|\cos^{m_{1}}\varphi\sin^{m_{2}}\varphi|^{\frac{4}{p}}\Bigr), \vspace{-5pt}
    \end{equation}
  which is defined by fixing \textit{a distorted angle} $\varphi$ via $x=(\mathrm{sgn}(\cos\varphi)r|\cos\varphi|^{\frac{2}{p}}, \mathrm{sgn}(\sin\varphi)r|\sin\varphi|^{\frac{2}{p}})$ ($0\leq\varphi<2\pi,r\geq0$) from the series representation of $J_{\omega}^{[p]}$(\cite{K1}, Proposition 2.6)
  \begin{equation}\label{series-omega}
      J_{\omega}^{[p]}(x)=\frac{(\frac{|x|_{p}}{p})^{\omega}(\frac{2}{p})^{2}}
      {\Gamma(\frac{1}{p})^{2}}\sum_{k=0}^{\infty}\frac{(-1)^{k}}{\Gamma(\frac{2}{p}(k+1)
      +\omega)}\sum_{m\in\mathbb{N}_{0}^{2}\ |m|'=k}\frac{\Gamma(\frac{2m+1}{p})}{(2m)!}x^{2m}\qquad\text{for }\omega\geq0
  \end{equation}
  (note that, for $n\in\mathbb{N}_{0}^{2}$, $x\in\mathbb{R}^{2}$ and $\xi\in\mathbb{R}_{>0}^{2}$, we denote $|n|':=n_{1}+n_{2},\ n!:=n_{1}!\cdot n_{2}!,\ x^{n}:=x_{1}^{n_{1}}\cdot x_{2}^{n_{2}}$ and $\Gamma(\xi):=\Gamma(\xi_{1})\Gamma(\xi_{2})$ according to multi-index notation).
    \begin{theorem}[\textit{Generalized Hardy's identity for the astroid-type $p$-circle}]\label{GHI}\ \\\itshape{Let $p$ satisfy $\frac{2}{p}\in\mathbb{N}$ and a finite set $\mathcal{A}_{s}^{[p]}$ consist of distorted angles $\varphi$ corresponding to lattice points on $p$-circle of radius $s^{1/p}\ (\geq1)$. Specifically, $\mathcal{A}_{s}^{[p]}$ is denoted as follows, and $\#\mathcal{A}_{s}^{[p]}\leq4[s^{\frac{1}{p}}]$ holds. 
    \begin{equation}\label{mathcalA}
    \mathcal{A}_{s}^{[p]}:=\{\varphi\in[0,2\pi)\ |\ (\mathrm{sgn}(\cos\varphi)s^{\frac{1}{p}}|\cos\varphi|^{\frac{2}{p}},\mathrm{sgn}(\sin\varphi)s^{\frac{1}{p}}|\sin\varphi|^{\frac{2}{p}})\in\mathbb{Z}^{2}\}.
    \end{equation}
    Then, the following holds for the counting measure $\mu$.}
      \begin{equation*}\label{HI-p}
    P_{p}(r)=\frac{p\Gamma(\frac{1}{p})^{2}}{2\pi}r\int_{1}^{\infty}\frac{1}{s^{\frac{1}{p}}}\Bigl(\sum\nolimits_{\varphi\in\mathcal{A}_{s}^{[p]}}\mathcal{J}_{1,\varphi}^{[p]}(2\pi s^{\frac{1}{p}}r)\Bigr)d\mu(s). 
      \end{equation*}
    \end{theorem}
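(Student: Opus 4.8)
The plan is to realize $P_p(r)$ as the sum over nonzero frequencies in a Poisson summation applied to the indicator of the $p$-ball, and then to identify each Fourier coefficient with a value of the generalized Bessel function $J_1^{[p]}$, i.e. with $\mathcal{J}_{1,\varphi}^{[p]}$. Writing $\chi_r$ for the characteristic function of $B_p(r)=\{x\in\mathbb{R}^2:|x|_p\le r\}$, one has $N_p(r)=\sum_{n\in\mathbb{Z}^2}\chi_r(n)$, and the Poisson summation formula should give $N_p(r)=\sum_{m\in\mathbb{Z}^2}\widehat{\chi_r}(m)$. The frequency $m=0$ produces $\widehat{\chi_r}(0)=|B_p(r)|=\frac{2}{p}\frac{\Gamma(1/p)^2}{\Gamma(2/p)}r^2$, which is exactly the area subtracted in \eqref{P_p}; hence $P_p(r)=\sum_{m\ne0}\widehat{\chi_r}(m)$. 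Because $\chi_r$ is discontinuous, this step is not literal and must be justified by a regularization (mollifying $\chi_r$, or passing through Bochner--Riesz type means built from $J_\omega^{[p]}$ for larger $\omega$) together with a summation convention that groups frequencies by $p$-radius; I return to this point below.

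The concrete heart of the argument is the evaluation of $\widehat{\chi_r}(\xi)$. By the scaling $\chi_r(x)=\chi_1(x/r)$ it suffices to treat $r=1$, and using the evenness of $\chi_1$ in each coordinate one writes $\widehat{\chi_1}(\eta)$ as four times a cosine integral over $\{x_1,x_2\ge0,\ x_1^p+x_2^p\le1\}$. First I would substitute $x_i=t_i^{1/p}$ and then pass to the radial variable $v=t_1+t_2$ with $t_1=vw$, $t_2=v(1-w)$; the $w$-integral reproduces precisely the integrand defining $J_0^{[p]}(2\pi v^{1/p}\eta)$, so that after the change $v=\tau^p$ one obtains
\begin{equation*}
  \widehat{\chi_1}(\eta)=p\,\Gamma(1/p)^2\int_0^1\tau\,J_0^{[p]}(2\pi\tau\eta)\,d\tau .
\end{equation*}
The $\omega=1$ instance of the integral representation underlying \eqref{series-omega} then identifies this integral with $J_1^{[p]}$, yielding $\widehat{\chi_1}(\eta)=\frac{p\Gamma(1/p)^2}{2\pi}\,|\eta|_p^{-1}J_1^{[p]}(2\pi\eta)$ and, after rescaling, $\widehat{\chi_r}(\xi)=\frac{p\Gamma(1/p)^2}{2\pi}\,\frac{r}{|\xi|_p}J_1^{[p]}(2\pi r\xi)$. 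It is here that the differential formula for the $J_\omega^{[p]}$ advertised in the abstract enters: the passage from the surface-type object $J_0^{[p]}$ to the solid-ball transform $J_1^{[p]}$ is exactly one step of Erd\'{e}lyi--Kober fractional integration in the $p$-radius, and the same relation guarantees that the limiting value at $\eta=0$ reproduces the area constant above.

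With the transform in hand I would reorganize $\sum_{m\ne0}\widehat{\chi_r}(m)$ according to the $p$-radius of the frequency. For a lattice point $m$ set $s=|m|_p^p$; then $m$ lies on the $p$-circle of radius $s^{1/p}$, its distorted angle $\varphi$ belongs to $\mathcal{A}_s^{[p]}$, and $J_1^{[p]}(2\pi r m)=\mathcal{J}_{1,\varphi}^{[p]}(2\pi s^{1/p}r)$ by the very definition \eqref{mathcalJ}, since positive scaling preserves the distorted angle and multiplies the $p$-radius. Because every nonzero lattice point satisfies $|m|_p\ge1$, grouping the terms by the value of $s$ turns $\sum_{m\ne0}$ into $\sum_{s\ge1}\sum_{\varphi\in\mathcal{A}_s^{[p]}}$, and recording the outer sum as integration against the counting measure $\mu$ produces precisely the claimed identity with prefactor $\frac{p\Gamma(1/p)^2}{2\pi}\frac{r}{s^{1/p}}$.

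The step I expect to be genuinely hard is the first one: the indicator $\chi_r$ is not smooth and the coefficients $\widehat{\chi_r}(m)$ are not absolutely summable, so Poisson summation cannot be invoked directly and the shell-ordered sum converges only conditionally. To make this rigorous I would establish the large-argument asymptotics and decay of $J_1^{[p]}(2\pi s^{1/p}r)$, control the number of lattice points on each shell through the bound $\#\mathcal{A}_s^{[p]}\le 4[s^{1/p}]$ stated in the theorem, and show that the regularized identity (for a mollified $\chi_r$, or for $J_\omega^{[p]}$ in a range of $\omega$ where absolute convergence holds) passes to the limit under the chosen summation convention. The decay estimates for the generalized Bessel functions in the regime $0<p<2$, where the boundary of $B_p$ lacks uniform convexity, are the delicate ingredient on which the whole convergence argument rests.
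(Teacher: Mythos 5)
Your argument is essentially the paper's own proof: Poisson summation applied to the indicator of the $p$-ball via the generalized Hankel transform, evaluation of the transform through the $J_0^{[p]}\to J_1^{[p]}$ integral recursion (the paper packages this step as the differential formula $\frac{d}{dr}\,r\mathcal{J}_{1,\varphi}^{[p]}(r)=r\mathcal{J}_{0,\varphi}^{[p]}(r)$ obtained from the Erd\'{e}lyi--Kober operator, which is equivalent to the $\omega=1$ defining integral you invoke), and regrouping of the nonzero frequencies by $p$-radius and distorted angle into the counting-measure integral. The convergence issue you flag is handled at the same formal level in the paper, which applies Poisson summation to the indicator directly and defers the convergence of the resulting series to its concluding remarks.
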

    \vspace{20pt}\par
  In Section 2, we clarify the necessary properties of the generalized Bessel functions $J_{\omega}^{[p]}$, $J_{\omega,\varphi}^{[p]}$ and prove the main result, and in the next Section 3, we conclude this paper by describing the relevance of our previous results and the prospects of our research.

  \section{Proof of Theorem \ref{GHI} (Generalized Hardy's Identity)}
  
  \subsection{\normalsize{The properties of the generalized Bessel functions}}
  \hspace{13pt}Firstly, for later use, we rewrite the series representation (\ref{mathcalJ}) of $\mathcal{J}_{\omega,\varphi}^{[p]}$ as follows. 
   \begin{align*}
      \sum_{m\in\mathbb{N}_{0}^{2}\ |m|'=k}&\frac{\Gamma(\frac{2m+1}{p})}{(2m)!}|\cos^{m_{1}}\varphi\sin^{m_{2}}\varphi|^{\frac{4}{p}}\\
      &\hspace{-30pt}=\sum_{n=0}^{k}\frac{\Gamma(\frac{1}{p}(2n+1))\Gamma(\frac{1}{p}(2(k-n)+1))}{(2n)!(2(k-n))!}(\cos^{\frac{4}{p}}\varphi)^{n}(\sin^{\frac{4}{p}}\varphi)^{k-n}\\
      &\hspace{-30pt}=\sum_{n=0}^{k}\frac{\pi\Gamma(\frac{1}{p}(2n+1))\Gamma(\frac{1}{p}(2(k-n)+1))}{n!\ 2^{2n}\Gamma(n+\frac{1}{2})\cdot(k-n)!\ 2^{2(k-n)}\Gamma(k-n+\frac{1}{2})}(\cos^{\frac{4}{p}}\varphi)^{n}(\sin^{\frac{4}{p}}\varphi)^{k-n}\\
      &\hspace{-30pt}=\frac{\pi}{2^{2k}}\sum_{n=0}^{k}\frac{1}{n!\ (k-n)!}\Bigl(\frac{\Gamma(\frac{2}{p}(n+\frac{1}{2}))\Gamma(\frac{2}{p}(k-n+\frac{1}{2}))}{\Gamma(n+\frac{1}{2})\Gamma(k-n+\frac{1}{2})}\Bigr)(\cos^{\frac{4}{p}}\varphi)^{n}(\sin^{\frac{4}{p}}\varphi)^{k-n}\\
      &\hspace{-30pt}=\frac{\pi}{2^{2k}}\sum_{n=0}^{k}\frac{\Gamma(\frac{2}{p}(k+1))}{n!\ (k-n)!}\Bigl(\frac{\Gamma(\frac{2}{p}(n+\frac{1}{2}))\Gamma(\frac{2}{p}(k-n+\frac{1}{2}))}{\Gamma(\frac{2}{p}(k+1))}\Bigr)\Bigl(\frac{\Gamma(k+1)}{\Gamma(n+\frac{1}{2})\Gamma(k-n+\frac{1}{2})}\Bigr)\frac{(\cos^{\frac{4}{p}}\varphi)^{n}(\sin^{\frac{4}{p}}\varphi)^{k-n}}{\Gamma(k+1)}\\
      &\hspace{-30pt}=\frac{\pi}{k!\ 2^{2k}}\sum_{n=0}^{k}\frac{(\frac{2}{p}(k+1)-1)!}{n!\ (k-n)!}\Bigl(\frac{B(\frac{2}{p}(n+\frac{1}{2}),\frac{2}{p}(k-n+\frac{1}{2}))}{B(n+\frac{1}{2},k-n+\frac{1}{2})}\Bigr)(\cos^{\frac{4}{p}}\varphi)^{n}(\sin^{\frac{4}{p}}\varphi)^{k-n}
      \quad=:\frac{\pi}{k!\ 2^{2k}}\Phi_{k,\varphi}^{[p]},
    \end{align*}\vspace{-8pt}
    \begin{equation*}
      \text{that is, }\mathcal{J}_{\omega,\varphi}^{[p]}(r)=\frac{(\frac{2}{p})^{2}}{\Gamma(\frac{1}{p})^{2}}\sum_{k=0}^{\infty}\frac{p^{2k}(-1)^{k}}{\Gamma(\frac{2}{p}(k+1)+\omega)}\Bigl(\frac{r}{p}\Bigr)^{2k+\omega}\frac{\pi}{k!\ 2^{2k}}\Phi_{k,\varphi}^{[p]}.
      \end{equation*}\par
      Note that $B(s,t)$ is the beta function. Thus, we obtain the following series representation.
    \begin{equation}\label{mathcalJ-1}
      \mathcal{J}_{\omega,\varphi}^{[p]}(r)=\Bigl(\frac{2}{p}\Bigr)^{2+\omega}\frac{\pi}{\Gamma(\frac{1}{p})^{2}}\sum_{k=0}^{\infty}\frac{(-1)^{k}}{k!\Gamma(\frac{2}{p}(k+1)+\omega)}\Bigl(\frac{r}{2}\Bigr)^{2k+\omega}\Phi_{k,\varphi}^{[p]},
    \end{equation}\vspace{-5pt}
    \begin{equation}\label{Phi}
      \text{with }\Phi_{k,\varphi}^{[p]}=\sum_{n=0}^{k}\frac{(\frac{2}{p}(k+1)-1)!}{n!\ (k-n)!}\Bigl(\frac{B(\frac{2}{p}(n+\frac{1}{2}),\frac{2}{p}(k-n+\frac{1}{2}))}{B(n+\frac{1}{2},k-n+\frac{1}{2})}\Bigr)(\cos^{\frac{4}{p}}\varphi)^{n}(\sin^{\frac{4}{p}}\varphi)^{k-n}.
    \end{equation}
    \par
    In particular, for $p=2$, these functions are invariant to the distorted angles $\varphi$. 
    \begin{align*}
      \Phi_{k,\varphi}^{[2]}&=\sum_{n=0}^{k}\frac{k!}{n!\ (k-n)!}\cdot1\cdot(\cos^{2}\varphi)^{n}(\sin^{2}\varphi)^{k-n}=(\cos^{2}\varphi+\sin^{2}\varphi)^{k}=1,\\
      \mathcal{J}_{\omega,\varphi}^{[2]}(r)&=\frac{\pi}{\Gamma(\frac{1}{2})^{2}}\sum_{k=0}^{\infty}\frac{(-1)^{k}}{k!\Gamma(k+1+\omega)}\Bigl(\frac{r}{2}\Bigr)^{2k+\omega}\Phi_{k,\varphi}^{[2]}=J_{\omega}(r).
    \end{align*}\par
    Furthermore, we show that $J_{\omega}^{[p]}$ and $\mathcal{J}_{\omega,\varphi}^{[p]}$ converge uniformly on compacts.
  \begin{proposition}[Compact convergence of series representations]\label{J-cc}
  Let $p$ satisfy $\frac{2}{p}\in\mathbb{N}$. For any $\omega\geq0,\ \varphi\in[0,2\pi)$, the series representation (\ref{series-omega}) (resp. (\ref{mathcalJ})) 
  \begin{align*}\vspace{-5pt}
      J_{\omega}^{[p]}(x)&=\frac{(\frac{|x|_{p}}{p})^{\omega}(\frac{2}{p})^{2}}
      {\Gamma(\frac{1}{p})^{2}}\sum_{k=0}^{\infty}\frac{(-1)^{k}}{\Gamma(\frac{2}{p}(k+1)
      +\omega)}\sum_{m\in\mathbb{N}_{0}^{2}\ |m|'=k}\frac{\Gamma(\frac{2m+1}{p})}{(2m)!}x^{2m},\\
      \mathcal{J}_{\omega,\varphi}^{[p]}(r)&=\frac{(\frac{2}{p})^{2}}{\Gamma(\frac{1}{p})^{2}}\sum_{k=0}^{\infty}\frac{p^{2k}(-1)^{k}}{\Gamma(\frac{2}{p}(k+1)+\omega)}\Bigl(\frac{r}{p}\Bigr)^{2k+\omega}\sum_{m\in\mathbb{N}_{0}^{2}\ |m|'=k}\frac{\Gamma(\frac{2m+1}{p})}{(2m)!}|\cos^{m_{1}}\varphi\sin^{m_{2}}\varphi|^{\frac{4}{p}}
    \end{align*}
  converge uniformly on every compact set in $\mathbb{R}^{2}$ (resp. $\mathbb{R}_{\geq0}$).
  \end{proposition}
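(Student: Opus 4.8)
The plan is to prove both statements by the Weierstrass M-test, reducing uniform convergence on compacts to convergence of a single numerical majorant. Fix $R>0$ and work on the box $K=\{x\in\mathbb{R}^{2}\ |\ |x_{1}|,|x_{2}|\le R\}$ (for $\mathcal{J}_{\omega,\varphi}^{[p]}$ the analogous compact set is $[0,R]$); since every compact set lies in such a box, this suffices. On $K$ I would bound $|x^{2m}|\le R^{2|m|'}=R^{2k}$ for each $m$ with $|m|'=k$ (and, for $\mathcal{J}_{\omega,\varphi}^{[p]}$, $|\cos^{m_{1}}\varphi\sin^{m_{2}}\varphi|^{4/p}\le 1$ together with $p^{2k}(r/p)^{2k}=r^{2k}\le R^{2k}$). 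Hence in both cases the modulus of the $k$-th term is dominated by $C\,\dfrac{R^{2k}}{\Gamma(\frac{2}{p}(k+1)+\omega)}\,S_{k}$, where $C$ collects the ($k$-independent, uniformly bounded on $K$) prefactors and $S_{k}:=\sum_{m_{1}+m_{2}=k}\dfrac{\Gamma(\frac{2m_{1}+1}{p})\Gamma(\frac{2m_{2}+1}{p})}{(2m_{1})!(2m_{2})!}$.

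The heart of the argument is to control $S_{k}$ against the denominator $\Gamma(\frac{2}{p}(k+1)+\omega)$, and this is the step I expect to be the main obstacle. When $\frac{2}{p}\ge 3$, the factor $\Gamma(\frac{2m+1}{p})$ grows faster than $(2m)!$, so the inner generating series $\sum_{m}\frac{\Gamma((2m+1)/p)}{(2m)!}z^{m}$ has radius of convergence zero and cannot be treated in isolation; the denominator must supply exactly the compensating decay. I would expose this through the Beta function: since $\frac{2m_{1}+1}{p}+\frac{2m_{2}+1}{p}=\frac{2}{p}(k+1)$, the identity $\Gamma(a)\Gamma(b)=B(a,b)\Gamma(a+b)$ gives $\Gamma(\tfrac{2m_{1}+1}{p})\Gamma(\tfrac{2m_{2}+1}{p})=B(\tfrac{2m_{1}+1}{p},\tfrac{2m_{2}+1}{p})\,\Gamma(\tfrac{2}{p}(k+1))$, so that $S_{k}=\Gamma(\tfrac{2}{p}(k+1))\sum_{m_{1}+m_{2}=k}\dfrac{B(\frac{2m_{1}+1}{p},\frac{2m_{2}+1}{p})}{(2m_{1})!(2m_{2})!}$.

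The remaining pieces I would then estimate by elementary bounds. First, each Beta value $B(\frac{2m_{1}+1}{p},\frac{2m_{2}+1}{p})$ is bounded by a constant $M$ independent of $k$ and $m$: its arguments lie in $[\frac{1}{p},\infty)$, on which $B$ is continuous and tends to $0$ at infinity, hence bounded (for $\frac{2}{p}\ge 2$ one even gets $B\le 1$ directly from the integral representation). Second, the binomial identity $\sum_{n=0}^{k}\frac{1}{(2n)!(2(k-n))!}=\frac{1}{(2k)!}\sum_{n=0}^{k}\binom{2k}{2n}\le \frac{4^{k}}{(2k)!}$ handles the factorials. Third, for $\omega\ge 0$ the ratio $\frac{\Gamma(\frac{2}{p}(k+1))}{\Gamma(\frac{2}{p}(k+1)+\omega)}$ is bounded by a constant $C_{\omega}$ (it equals $1$ when $\omega=0$ and tends to $0$ otherwise). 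Combining these, the $k$-th majorant is at most $C\,C_{\omega}M\,\dfrac{(4R^{2})^{k}}{(2k)!}$.

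The proof then concludes immediately: $\sum_{k=0}^{\infty}\frac{(4R^{2})^{k}}{(2k)!}=\cosh(2R)<\infty$, so the numerical majorant converges and the M-test yields uniform convergence of both series on $K$, hence on every compact subset of $\mathbb{R}^{2}$ (resp. $\mathbb{R}_{\ge 0}$). The only delicate point is the Beta-function rewriting of the second paragraph; once the super-factorial growth of $\Gamma(\frac{2m+1}{p})$ is absorbed into $\Gamma(\frac{2}{p}(k+1))$ and cancelled against the denominator, everything reduces to a $\cosh$-type entire majorant and the remaining estimates are routine.
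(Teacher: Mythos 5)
Your proof is correct and follows essentially the same route as the paper: the key cancellation --- absorbing the super-factorial growth of $\Gamma(\frac{2m+1}{p})$ into $\Gamma(\frac{2}{p}(k+1))$ via $\Gamma(a)\Gamma(b)=B(a,b)\Gamma(a+b)$ and bounding the Beta factor by $B(\frac{1}{p},\frac{1}{p})=\Gamma(\frac{1}{p})^{2}/\Gamma(\frac{2}{p})$ (monotonicity of $B$ in each argument is the cleanest justification) --- is precisely the content of the paper's Gamma-ratio inequality (\ref{Gamma-ineq}), and your constant $C_{\omega}$ bounding $\Gamma(z)/\Gamma(z+\omega)$ plays the role of the paper's $C(\omega)$. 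The only cosmetic difference is bookkeeping: the paper rearranges into a double series over $(m_{1},m_{2})$ and majorizes by the product $e^{|x_{1}|+|x_{2}|}$ via the Cauchy product, whereas you keep the grouping by $k$ and use $\sum_{n}\binom{2k}{2n}\leq 4^{k}$ to land on $\cosh(2R)$.
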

  \begin{proof}
    Firstly, we show that inequality
    \begin{equation}\label{Gamma-ineq}
      \frac{\Gamma(n+\frac{k}{2})\Gamma(m+\frac{k}{2})}{\Gamma(n+m+k)}\leq\frac{\Gamma(\frac{k}{2})^{2}}{\Gamma(k)}
    \end{equation}
    holds for $k\in\mathbb{N},\ n,m\in\mathbb{N}_{0}$. In fact, the case $n=m=0$ is obvious, and 
    \begin{equation*}
      \hspace{-40pt}\frac{\Gamma(n+\frac{k}{2})\Gamma(m+\frac{k}{2})}{\Gamma(n+m+k)}
     =\frac{(\Pi_{l=1}^{n-1}(l+\frac{k}{2}))\frac{k}{2}\Gamma(\frac{k}{2})\cdot(\Pi_{l=1}^{m-1}(l+\frac{k}{2}))\frac{k}{2}\Gamma(\frac{k}{2})}{(\Pi_{l=1}^{n-1}(l+m+k))(m+k)(\Pi_{l=1}^{m-1}(l+k))k!}
    \leq\frac{(\frac{k}{2}\Gamma(\frac{k}{2}))^{2}}{(m+k)k!}\leq\frac{\Gamma(\frac{k}{2})^{2}}{4\Gamma(k)},
    \end{equation*}
    holds for the cases $n,m\in\mathbb{N}$, while 
    \begin{equation*}
      \frac{\Gamma(\frac{k}{2})\Gamma(m+\frac{k}{2})}{\Gamma(m+k)}=\frac{\Gamma(\frac{k}{2})(\Pi_{l=1}^{m-1}(l+\frac{k}{2}))\frac{k}{2}\Gamma(\frac{k}{2})}{(\Pi_{l=1}^{m-1}(l+k))k!}\leq\frac{\Gamma(\frac{k}{2})^{2}}{2\Gamma(k)}
    \end{equation*}
    holds for the cases $n=0,m\in\mathbb{N}$ (similarly for the cases $n\in\mathbb{N},m=0$). \vspace{3pt}\par
    In addition, we also consider the fact that $\Gamma(n+\omega)=(\Pi_{l=1}^{n-1}(l+\omega))\omega\Gamma(\omega)\geq\Gamma(n)\omega\Gamma(\omega)$ holds for $\omega>0$ and define the following. \vspace{-5pt}
    \begin{equation*}\vspace{-5pt}
      C(\omega):=\frac{1}{\omega\Gamma(\omega)}\quad\text{for }\omega>0,\quad 1\quad\text{for }\omega=0.
    \end{equation*}\par\vspace{5pt}
    Then, for $p$ satisying $\frac{2}{p}\in\mathbb{N},\ \omega\geq0,\ x\in K\subset\mathbb{R}^{2}$ ($K$: an arbitrary compact set), the display(\ref{series-omega})
    \begin{align*}
      J_{\omega}^{[p]}(x)&=\Bigl(\frac{|x|_{p}}{p}\Bigr)^{\omega}\Bigl(\frac{2}{p\Gamma(\frac{1}{p})}\Bigr)^{2}\sum_{k=0}^{\infty}\frac{(-1)^{k}}{\Gamma(\frac{2}{p}(k+1)+\omega)}\sum_{m\in\mathbb{N}_{0}^{2}\ |m|'=k}\frac{\Gamma(\frac{2m+1}{p})}{(2m)!}x^{2m}\\
      &=\sum_{m_{1}=0}^{\infty}\sum_{m_{2}=0}^{\infty}\Bigl(\frac{|x|_{p}}{p}\Bigr)^{\omega}\Bigl(\frac{2}{p\Gamma(\frac{1}{p})}\Bigr)^{2}\frac{(-1)^{m_{1}+m_{2}}}{\Gamma(\frac{2}{p}(m_{1}+m_{2}+1)+\omega)}\frac{\Gamma(\frac{2m+1}{p})}{(2m)!}x^{2m}
    \end{align*}
    uniformly converges on $K$ (that is, uniformly converge on every compact set in $\mathbb{R}^{2}$). In fact, from the inequality (\ref{Gamma-ineq}), since it can be evaluated from above as 
    \begin{equation}\label{C-omega}
      \frac{\Gamma(\frac{2}{p}m_{1}+\frac{1}{p})\Gamma(\frac{2}{p}m_{2}+\frac{1}{p})}{\Gamma(\frac{2}{p}(m_{1}+m_{2}+1)+\omega)}\leq C(\omega)\frac{\Gamma(\frac{2}{p}m_{1}+\frac{1}{p})\Gamma(\frac{2}{p}m_{2}+\frac{1}{p})}{\Gamma(\frac{2}{p}(m_{1}+m_{2}+1))}\leq C(\omega)\frac{\Gamma(\frac{1}{p})^{2}}{\Gamma(\frac{2}{p})},
    \end{equation}
    the following holds, noting the Cauchy product.
    \begin{align*}
      \sum_{m_{1}=0}^{\infty}\sum_{m_{2}=0}^{\infty}\Bigl|\Bigl(\frac{|x|_{p}}{p}\Bigr)^{\omega}\Bigl(\frac{2}{p\Gamma(\frac{1}{p})}\Bigr)^{2}&\frac{(-1)^{m_{1}+m_{2}}}{\Gamma(\frac{2}{p}(m_{1}+m_{2}+1)+\omega)}\frac{\Gamma(\frac{2m+1}{p})}{(2m)!}x^{2m}\Bigr|\\
      &\leq\sum_{m_{1}=0}^{\infty}\sum_{m_{2}=0}^{\infty}\Bigl(\frac{|x|_{p}}{p}\Bigr)^{\omega}\Bigl(\frac{2}{p\Gamma(\frac{1}{p})}\Bigr)^{2}\frac{|x_{1}|^{2m_{1}}|x_{2}|^{2m_{2}}}{(2m_{1})!(2m_{2})!}C(\omega)\frac{\Gamma(\frac{1}{p})^{2}}{\Gamma(\frac{2}{p})}\\
      &=\frac{4C(\omega)|x|_{p}^{\omega}}{p^{\omega+2}\Gamma(\frac{2}{p})}\Bigl(\sum_{m_{1}=0}^{\infty}\frac{|x_{1}|^{2m_{1}}}{(2m_{1})!}\Bigr)\Bigl(\sum_{m_{2}=0}^{\infty}\frac{|x_{2}|^{2m_{2}}}{(2m_{2})!}\Bigr)\\
      &\leq\frac{4C(\omega)|x|_{p}^{\omega}}{p^{\omega+2}\Gamma(\frac{2}{p})}e^{|x_{1}|+|x_{2}|}<+\infty.
    \end{align*}\par
    By the following obtained from the similar discussion, it is also shown that the series representation (\ref{mathcalJ}) converges uniformly on every compact set in $\mathbb{R}_{\geq0}$.
    \begin{align*}
      \sum_{m_{1}=0}^{\infty}\sum_{m_{2}=0}^{\infty}\Bigl|\Bigl(\frac{r}{p}\Bigr)^{\omega}\Bigl(\frac{2}{p\Gamma(\frac{1}{p})}\Bigr)^{2}\frac{(-1)^{m_{1}+m_{2}}}{\Gamma(\frac{2}{p}(m_{1}+m_{2}+1)+\omega)}&\frac{\Gamma(\frac{2m+1}{p})}{(2m)!}(r\cos^{\frac{2}{p}}\varphi)^{2m_{1}}(r\sin^{\frac{2}{p}}\varphi)^{2m_{2}}\Bigr|\\
  &\hspace{-95pt}\leq\frac{4C(\omega)r^{\omega}}{p^{\omega+2}\Gamma(\frac{2}{p})}e^{r(|\cos\varphi|^{\frac{2}{p}}+|\sin\varphi|^{\frac{2}{p}})}<+\infty.
    \end{align*}
  \end{proof}\par
  Now, we follow the method deriving the integral formula for $J_{\omega}$ (\cite{Stein-1971}, Lemma 4.13)
  \begin{equation}\label{Ire-J}
    J_{\omega+\gamma}(r)=\frac{r^{\gamma}}{2^{\gamma-1}\Gamma(\gamma)}\int_{0}^{1}
    J_{\omega}(\tau r)\tau^{\omega+1}(1-\tau^{2})^{\gamma-1}d\tau\qquad\text{for }\omega>-\frac{1}{2},\ \gamma>0,\ r>0
  \end{equation}
  and derive the counterparts for $J_{\omega}^{[p]}$ and $\mathcal{J}_{\omega,\varphi}^{[p]}$ as follows. \vspace{10pt}\par
  Let $p$ satisfy $\frac{2}{p}\in\mathbb{N}$. For $\omega\geq0,\ \gamma>0,\ x\in\mathbb{R}^{2}\setminus{\{0\}}$, from the series representation (\ref{series-omega}), based on the similar discussions as in the proof of Proposition \ref{J-cc}, we express it as
   \begin{align*}
    \int_{0}^{1}J_{\omega}^{[p]}&(\tau x)\tau^{(p-1)\omega+1}(1-\tau^{p})^{\gamma-1}d\tau\\
    &=\int_{0}^{1}\Bigl(\frac{(\frac{2}{p})^{2}|\tau x|_{p}^{\omega}}{p^{\omega}\Gamma(\frac{1}{p})^{2}}\sum_{k=0}^{\infty}\frac{(-1)^{k}}{\Gamma(\frac{2}{p}(k+1)+\omega)}\sum_{m\in\mathbb{N}_{0}^{2}\ |m|'=k}\frac{\Gamma(\frac{2m+1}{p})}{(2m)!}\tau^{2k}x^{2m}\Bigr)\tau^{(p-1)\omega+1}(1-\tau^{p})^{\gamma-1}d\tau\\
    &=\frac{(\frac{2}{p})^{2}|x|_{p}^{\omega}}{p^{\omega}\Gamma(\frac{1}{p})^{2}}
    \sum_{k=0}^{\infty}\frac{(-1)^{k}}{\Gamma(\frac{2}{p}(k+1)+\omega)}\sum_{m\in\mathbb{N}_{0}^{2}\ |m|'=k}\frac{\Gamma(\frac{2m+1}{p})}{(2m)!}x^{2m}\int_{0}^{1}\tau^{2(k+1)+p\omega-1}(1-\tau^{p})^{\gamma-1}d\tau\\
    &=\frac{(\frac{2}{p})^{2}|x|_{p}^{\omega}}{p^{\omega}\Gamma(\frac{1}{p})^{2}}\sum_{k=0}^{\infty}\frac{(-1)^{k}}{\Gamma(\frac{2}{p}(k+1)+\omega)}\sum_{m\in\mathbb{N}_{0}^{2}\ |m|'=k}\frac{\Gamma(\frac{2m+1}{p})}{(2m)!}x^{2m}\Bigl(\frac{\Gamma(\frac{2(k+1)}{p}+\omega)\Gamma(\gamma)}{\ p\Gamma(\frac{2(k+1)}{p}+(\omega+\gamma))}\Bigr)\\
    &=\frac{\Gamma(\gamma)p^{\gamma-1}}{|x|_{p}^{\gamma}}\frac{(\frac{2}{p})^{2}|x|_{p}^{\omega+\gamma}}{p^{\omega+\gamma}\Gamma(\frac{1}{p})^{2}}\sum_{k=0}^{\infty}\frac{(-1)^{k}}{\Gamma(\frac{2}{p}(k+1)+(\omega+\gamma))}\sum_{m\in\mathbb{N}_{0}^{2}\ |m|'=k}\frac{\Gamma(\frac{2m+1}{p})}{(2m)!}x^{2m}\\
    &=\frac{\Gamma(\gamma)p^{\gamma-1}}{|x|_{p}^{\gamma}}J_{\omega+\gamma}^{[p]}(x).
  \end{align*}\par
  The term-by-term integrability in the second to third lines is guaranteed by the special case of Fubini's theorem, since 
  \begin{equation*}
    \hspace{-20pt}\sum_{m\in\mathbb{N}_{0}^{2}}\int_{0}^{1}\Bigl|\frac{(\frac{2}{p})^{2}|\tau x|_{p}^{\omega}(-1)^{|m|'}\Gamma(\frac{2m+1}{p})\tau^{2|m|'}x^{2m}\tau^{(p-1)\omega+1}(1-\tau^{p})^{\gamma-1}}{p^{\omega}\Gamma(\frac{1}{p})^{2}\Gamma(\frac{2}{p}(|m|'+1)+\omega)(2m)!}\Bigr|d\tau\leq\frac{4C(\omega)|x|_{p}^{\omega}B(\frac{1}{p},\gamma)e^{|x_{1}|+|x_{2}|}}{p^{\omega+3}\Gamma(\frac{2}{p})}<+\infty
  \end{equation*}
  holds from Proposition \ref{J-cc} and (\ref{C-omega}). As for the equal sign in the third to fourth lines, we used the formula to display the beta function from the gamma function only, by expressing the variable transformation as follows.
  \begin{equation*}
  \int_{0}^{1}\tau^{2(k+1)+p\omega-1}(1-\tau^{p})^{\gamma-1}d\tau=\int_{0}^{1}r^{\frac{2}{p}(k+1)+\omega-\frac{1}{p}}(1-r)^{\gamma-1}p^{-1}r^{\frac{1}{p}-1}dr=p^{-1}B\Bigl(\frac{2(k+1)}{p}+\omega,\gamma\Bigr).
  \end{equation*}
  \par
  By transforming the equation for the one-variable function $J_{\omega,\varphi}^{[p]}$ with fixed distorted angle $\varphi\in[0,2\pi)$ from the similar discussions, we obtain the following two formulas as the desired corresponding expressions of the integral formula (\ref{Ire-J}).
  \begin{equation}
    J_{\omega+\gamma}^{[p]}(x)=\frac{|x|_{p}^{\gamma}}{p^{\gamma-1}\Gamma(\gamma)}\int_{0}^{1}J_{\omega}^{[p]}(\tau x)\tau^{(p-1)\omega+1}(1-\tau^{p})^{\gamma-1}d\tau\quad\text{for }\omega\geq0,\ \gamma>0,\ x\in\mathbb{R}^{2}\setminus{\{0\}},\label{omega_J}
  \end{equation}\vspace{-5pt}
  \begin{equation}
    \hspace{-55pt}\mathcal{J}_{\omega+\gamma,\varphi}^{[p]}(r)=\frac{r^{\gamma}}{p^{\gamma-1}\Gamma(\gamma)}\int_{0}^{1}\mathcal{J}_{\omega,\varphi}^{[p]}(\tau r)\tau^{(p-1)\omega+1}(1-\tau^{p})^{\gamma-1}d\tau\quad\text{for }\omega\geq0,\ \gamma>0,\ r>0.\label{mathcalJ-int-rec}
  \end{equation}
  
  \subsection{\normalsize{The Erd\'{e}lyi-Kober operator and a differential formula for $\mathcal{J}_{\omega,\varphi}^{[p]}$}}
  \hspace{13pt}Firstly, we introduce \textit{the Erd\'{e}lyi-Kober type fractional integrals} (\cite{FDE}, (2.6.1)), 
  \begin{equation*}\label{E-K-op.}
    \hspace{-30pt}(I_{a+;p,\eta}^{\alpha})f(r):=\frac{pr^{-p(\alpha+\eta)}}{\Gamma(\alpha)}\int_{a}^{r}\frac{t^{p(\eta+1)-1}f(t)}{(r^{p}-t^{p})^{1-\alpha}}dt\ \Bigl(=\frac{1}{\Gamma(\alpha)}\int_{(\frac{a}{r})^{p}}^{1}\frac{t^{\eta}f(t^{\frac{1}{p}}r)}{(1-t)^{1-\alpha}}dt\Bigr)\quad\text{for }0\leq a<r,
  \end{equation*}
   for $\alpha,\eta\in\mathbb{C}$ satisfying $\mathrm{Re}(\alpha)>0$. Note that $\alpha$ is an order. In particular, for $a=0$, since this can be expressed as 
   \begin{equation}\label{E-K-op. a=0}
    (I_{0+;p,\eta}^{\alpha})f(r)=\frac{pr^{-p(\alpha+\eta)}}{\Gamma(\alpha)}\int_{0}^{1}\frac{r^{p(\eta+1)-1}\tau^{p(\eta+1)-1}f(\tau r)}{r^{p(1-\alpha)}(1-\tau^{p})^{1-\alpha}}rd\tau=\frac{p}{\Gamma(\alpha)}\int_{0}^{1}\frac{\tau^{p(\eta+1)-1}f(\tau r)}{(1-\tau^{p})^{1-\alpha}}d\tau,
  \end{equation}
   by applying it to $f=\mathcal{J}_{\omega,\varphi}^{[p]},\ \alpha=\gamma,\ \eta=(1-\frac{1}{p})\omega+\frac{2}{p}-1$, from the integral formula (\ref{mathcalJ-int-rec}), we obtain 
  \begin{equation*}
    (I_{0+;p,(1-\frac{1}{p})\omega+\frac{2}{p}-1}^{\gamma})\mathcal{J}_{\omega,\varphi}^{[p]}(r)=\frac{p}{\Gamma(\gamma)}\int_{0}^{1}\tau^{(p-1)\omega+1}(1-\tau^{p})^{\gamma-1}\mathcal{J}_{\omega,\varphi}^{[p]}(\tau r)d\tau=\frac{p}{\Gamma(\gamma)}\Bigl(\frac{\Gamma(\gamma)p^{\gamma-1}}{r^{\gamma}}\mathcal{J}_{\omega+\gamma,\varphi}^{[p]}(r)\Bigr),
  \end{equation*}
   that is, 
  \begin{equation}\label{E-K-intJ}
    (I_{0+;p,(1-\frac{1}{p})\omega+\frac{2}{p}-1}^{\gamma})\mathcal{J}_{\omega,\varphi}^{[p]}(r)=\Bigl(\frac{p}{r}\Bigr)^{\gamma}\mathcal{J}_{\omega+\gamma,\varphi}^{[p]}(r).
  \end{equation}
  \begin{remark}
  The form of the equality (\ref{E-K-intJ}) gives justification to the following equality for $J_{\omega}^{[p]}$ (\ref{A-I-intJ}), which is formally derived by using the Erd\'{e}lyi-Kober type fractional integrals operator of multiple variables (see \cite{AomotoIguchi}) 
  \begin{equation*}\label{2-E-K}
    P_{\kappa}(\eta,\alpha)f(x)=\frac{1}{\Gamma(\alpha)}\int_{0}^{1}\frac{t^{\eta}f(t^{\kappa}x)}{(1-t)^{1-\alpha}}dt\qquad\text{for }x\in\mathbb{R}^{2}
  \end{equation*}
  and the integral formula (\ref{omega_J}), without considering the regularity conditions and the range of some symbols (that is, (\ref{A-I-intJ}) corresponds to the version of multiple variables of (\ref{E-K-intJ})). \vspace{-5pt}  
  \begin{align}
    \Bigl(\frac{p}{|x|_{p}}\Bigr)^{\gamma}J_{\omega+\gamma}^{[p]}(x)&=\Bigl(\frac{p}{|x|_{p}}\Bigr)^{\gamma}\frac{|x|_{p}^{\gamma}}{p^{\gamma-1}\Gamma(\gamma)}\int_{0}^{1}J_{\omega}^{[p]}(\tau x)\tau^{(p-1)\omega+1}(1-\tau^{p})^{\gamma-1}d\tau\notag\\
    &=\frac{p}{\Gamma(\gamma)}\int_{0}^{1}J_{\omega}^{[p]}(t^{\frac{1}{p}}x)t^{(1-\frac{1}{p})\omega+\frac{1}{p}}(1-t)^{\gamma-1}p^{-1}t^{\frac{1}{p}-1}dt\notag\\
    &=\frac{1}{\Gamma(\gamma)}\int_{0}^{1}\frac{t^{(1-\frac{1}{p})\omega+\frac{2}{p}-1}
    J_{\omega}^{[p]}(t^{\frac{1}{p}}x)}{(1-t)^{1-\gamma}}dt\notag\\
    &=P_{\frac{1}{p}}\Bigl(\Bigl(1-\frac{1}{p}\Bigr)\omega+\frac{2}{p}-1,\gamma\Bigr)J_{\omega}^{[p]}(x).\label{A-I-intJ}
  \end{align}
  \end{remark}\vspace{10pt}
  Furthermore, for $\alpha\in\mathbb{C}\setminus{\{0\}}$ satisfying $\mathrm{Re}(\alpha)\geq0$, $n:=[\mathrm{Re}(\alpha)]+1,\ p>0$ and $\eta\in\mathbb{C}$, we introduce \textit{the Erd\'{e}lyi-Kober type fractional derivatives} (\cite{FDE}, (2.6.29)) 
  \begin{equation}\label{E-K-op.2}
    (D_{a+;p,\eta}^{\alpha})f(r):=r^{-p\eta}\Bigl(\frac{1}{pr^{p-1}}\frac{d}{dr}\Bigr)^{n}r^{p(n+\eta)}(I_{a+;p,\eta+\alpha}^{n-\alpha})f(r)\qquad\text{for }0\leq a<r.
  \end{equation}\par
  Then, the following important property holds between the two operators (\cite{FDE}, (2.6.43)).
  \begin{equation}\label{DI}
    (D_{a+;p,\eta}^{\alpha}I_{a+;p,\eta}^{\alpha})f(r)=f(r).
  \end{equation}
  \par\vspace{10pt}
  Now, let $p$ satisfy $\frac{2}{p}\in\mathbb{N}$, $\omega\geq0,\ 0\leq\varphi<2\pi$, $a=0$, $\alpha=\gamma\in(0,1)$ and $\eta=(1-\frac{1}{p})\omega+\frac{2}{p}-1$, then
  the Erd\'{e}lyi-Kober type fractional derivative (\ref{E-K-op.2}) of $(\frac{p}{r})^{\gamma}\mathcal{J}_{\omega+\gamma,\varphi}^{[p]}(r)$ can be expressed as 
  \begin{equation*}
    D^{\gamma}_{0+;p, (1-\frac{1}{p})\omega+\frac{2}{p}-1}\Bigl[\Bigl(\frac{p}{r}\Bigr)^{\gamma}\mathcal{J}_{\omega+\gamma,\varphi}^{[p]}(r)\Bigr]=\frac{r^{(1-p)\omega+p-2}}{pr^{p-1}}\frac{d}{dr}r^{(p-1)\omega+2}\Bigl(I^{1-\gamma}_{0+;p,(1-\frac{1}{p})\omega+\frac{2}{p}-1+\gamma}\Bigl[\Bigl(\frac{p}{r}\Bigr)^{\gamma}\mathcal{J}_{\omega+\gamma,\varphi}^{[p]}(r)\Bigr]\Bigr).
  \end{equation*}
  Combining this with (\ref{E-K-intJ}), (\ref{DI}) and (\ref{E-K-op. a=0}), we obtain
  \begin{align*}
    \mathcal{J}_{\omega,\varphi}^{[p]}(r)&=D^{\gamma}_{0+;p, (1-\frac{1}{p})\omega+\frac{2}{p}-1}\Bigl[\Bigl(\frac{p}{r}\Bigr)^{\gamma}\mathcal{J}_{\omega+\gamma,\varphi}^{[p]}(r)\Bigr]\\
    &=\frac{r^{(1-p)\omega-1}}{p}\frac{d}{dr}r^{(p-1)\omega+2}\Bigl(\frac{p}{\Gamma(1-\gamma)}\int_{0}^{1}\frac{\tau^{p((1-\frac{1}{p})\omega+\frac{2}{p}-1+\gamma+1)-1}}{(1-\tau^{p})^{1-(1-\gamma)}}\Bigl(\frac{p}{\tau r}\Bigr)^{\gamma}\mathcal{J}_{\omega+\gamma,\varphi}^{[p]}(\tau r)d\tau\Bigr)\\
    &=\frac{1}{r^{1+(p-1)\omega}}\frac{d}{dr}r^{(p-1)\omega+2-\gamma}\frac{p^{\gamma}}{\Gamma(1-\gamma)}\int_{0}^{1}\frac{\tau^{(p-1)\omega+2+p\gamma-1-\gamma}}{(1-\tau^{p})^{1-(1-\gamma)}}\mathcal{J}_{\omega+\gamma,\varphi}^{[p]}(\tau r)d\tau\\
    &=\frac{1}{r^{1+(p-1)\omega}}\frac{d}{dr}r^{1+(p-1)\omega}\frac{r^{1-\gamma}}{p^{(1-\gamma)-1}\Gamma(1-\gamma)}\int_{0}^{1}\frac{\tau^{(p-1)(\omega+\gamma)+1}}{(1-\tau^{p})^{1-(1-\gamma)}}\mathcal{J}_{\omega+\gamma,\varphi}^{[p]}(\tau r)d\tau\\
    &=\frac{1}{r^{1+(p-1)\omega}}\frac{d}{dr}r^{1+(p-1)\omega}\mathcal{J}_{(\omega+\gamma)+(1-\gamma)}^{[p]}(r).
  \end{align*}
  Note the use of the integral formula (\ref{mathcalJ-int-rec}) in the last line. Thus, it is clear that differential formula of decreasing order holds as follows. 
  \begin{proposition}[\textit{Differential formulas}]\label{df-p}
  \itshape{Let $p>0$ such that $\frac{2}{p}\in\mathbb{N}$, then the following holds.}
  \begin{equation}\label{df-p1}
    \frac{d}{dr}r^{1+(p-1)\omega}\mathcal{J}_{\omega+1,\varphi}^{[p]}(r)=r^{1+(p-1)\omega}\mathcal{J}_{\omega,\varphi}^{[p]}(r)\qquad\text{for }\omega\geq0,\ 0\leq\varphi<2\pi.
  \end{equation}
   In particular, due to $\mathcal{J}_{\omega,\varphi}^{[2]}=J_{\omega}$, this result includes the following differential formula for the Bessel functions. 
  \begin{equation}\label{df-1}
    \frac{d}{dr}r^{\omega+1}J_{\omega+1}(r)=r^{\omega+1}J_{\omega}(r).
  \end{equation}
  \end{proposition}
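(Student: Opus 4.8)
The plan is to read the identity off from the integral recursion (\ref{mathcalJ-int-rec}), which already encapsulates the Erd\'{e}lyi--Kober computation performed above, rather than re-running the full fractional-derivative operator. Specializing (\ref{mathcalJ-int-rec}) to the endpoint order $\gamma=1$ collapses the weight, since $(1-\tau^{p})^{\gamma-1}=1$ and $\Gamma(\gamma)=1$ there, and gives
\begin{equation*}
\mathcal{J}_{\omega+1,\varphi}^{[p]}(r)=r\int_{0}^{1}\mathcal{J}_{\omega,\varphi}^{[p]}(\tau r)\,\tau^{(p-1)\omega+1}\,d\tau.
\end{equation*}
The substitution $u=\tau r$ rewrites the right-hand side as an integral over $[0,r]$, and after collecting the resulting powers of $r$ one obtains the primitive form
\begin{equation*}
r^{1+(p-1)\omega}\mathcal{J}_{\omega+1,\varphi}^{[p]}(r)=\int_{0}^{r}u^{1+(p-1)\omega}\mathcal{J}_{\omega,\varphi}^{[p]}(u)\,du.
\end{equation*}

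The first step is then to check that the integrand $u\mapsto u^{1+(p-1)\omega}\mathcal{J}_{\omega,\varphi}^{[p]}(u)$ is continuous on $[0,\infty)$. For $u>0$ this is clear, continuity of $\mathcal{J}_{\omega,\varphi}^{[p]}$ being immediate from the compact convergence of (\ref{mathcalJ}) established in Proposition \ref{J-cc}; near the origin the $k=0$ term of (\ref{mathcalJ}) shows $\mathcal{J}_{\omega,\varphi}^{[p]}(u)$ is of order $u^{\omega}$, so the integrand is of order $u^{1+p\omega}$ and extends continuously to $u=0$ even when $1+(p-1)\omega<0$. The second step is a single application of the fundamental theorem of calculus: differentiating the right-hand side in $r$ returns the integrand evaluated at $r$, which is exactly $r^{1+(p-1)\omega}\mathcal{J}_{\omega,\varphi}^{[p]}(r)$, giving (\ref{df-p1}). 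The special case (\ref{df-1}) then follows by setting $p=2$ and invoking the identity $\mathcal{J}_{\omega,\varphi}^{[2]}=J_{\omega}$ recorded above.

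As a fully self-contained alternative I would differentiate the series (\ref{mathcalJ-1}) term by term, treating $\Phi_{k,\varphi}^{[p]}$ from (\ref{Phi}) as an $r$-independent constant. Here $r^{1+(p-1)\omega}\mathcal{J}_{\omega+1,\varphi}^{[p]}(r)$ is a power series whose $k$-th monomial is a constant multiple of $r^{2k+p\omega+2}$; differentiating produces the factor $2k+p\omega+2$, while lowering the order from $\omega+1$ to $\omega$ on the right-hand side introduces the factor $\tfrac{2}{p}(k+1)+\omega$ through the recursion $\Gamma(\tfrac{2}{p}(k+1)+\omega+1)=(\tfrac{2}{p}(k+1)+\omega)\Gamma(\tfrac{2}{p}(k+1)+\omega)$. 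After collecting the constant prefactors and the powers of $2$ and $p$, the identity reduces term by term to the elementary equality $p\bigl(\tfrac{2}{p}(k+1)+\omega\bigr)=2k+p\omega+2$.

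I expect the only genuine obstacle in either route to be the analytic justification of interchanging the limit operation with the infinite sum. For the term-by-term route this means verifying that the differentiated series still converges uniformly on compact sets; this follows from the majorization used in Proposition \ref{J-cc}, because the extra linear factor $2k+p\omega+2$ is dominated by the factorial/gamma decay of the coefficients. For the integral route it reduces to the continuity of $\mathcal{J}_{\omega,\varphi}^{[p]}$, already in hand, so that the fundamental theorem of calculus applies. Everything else is bookkeeping with the Gamma recursion $\Gamma(z+1)=z\Gamma(z)$.
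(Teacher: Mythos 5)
Your argument is correct, and your primary route is genuinely different from (and shorter than) the paper's own derivation. The paper obtains (\ref{df-p1}) by running the Erd\'{e}lyi--Kober machinery in full: it applies the fractional derivative $D^{\gamma}_{0+;p,(1-\frac{1}{p})\omega+\frac{2}{p}-1}$ with $\gamma\in(0,1)$ to the identity (\ref{E-K-intJ}), invokes the inversion property (\ref{DI}), and after unwinding the composition of the order-$(1-\gamma)$ integral with the order-$\gamma$ one arrives at $\mathcal{J}_{\omega,\varphi}^{[p]}(r)=r^{-1-(p-1)\omega}\frac{d}{dr}\,r^{1+(p-1)\omega}\mathcal{J}_{\omega+1,\varphi}^{[p]}(r)$. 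You instead observe that the $\gamma=1$ case of (\ref{mathcalJ-int-rec}) already \emph{is} the antiderivative form $r^{1+(p-1)\omega}\mathcal{J}_{\omega+1,\varphi}^{[p]}(r)=\int_{0}^{r}u^{1+(p-1)\omega}\mathcal{J}_{\omega,\varphi}^{[p]}(u)\,du$, so that a single application of the fundamental theorem of calculus suffices; your continuity check at $u=0$ (the $u^{\omega}$ vanishing of $\mathcal{J}_{\omega,\varphi}^{[p]}$ compensating the possibly negative exponent $1+(p-1)\omega$ when $p<1$) is exactly the point that needs addressing, and the resulting integrated identity at $\omega=0$ is precisely what the paper itself invokes later in the proof of Theorem \ref{GHI}. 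Your fallback route --- term-by-term differentiation of the power series (\ref{mathcalJ-1}) using $\Gamma(z+1)=z\Gamma(z)$, with $\Phi_{k,\varphi}^{[p]}$ treated as an $r$-independent constant so that termwise differentiation is automatic for a power series of infinite radius --- coincides with the verification the paper records in the Remark following the Proposition, down to the closing arithmetic $p(\tfrac{2}{p}(k+1)+\omega)=2k+p\omega+2$. What the paper's fractional-calculus route buys is motivational rather than logical: it explains how the exponent $1+(p-1)\omega$ was discovered, whereas your FTC route, taking (\ref{mathcalJ-int-rec}) as given, is the cleaner way to certify the formula once its form is known.
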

 
  \par\vspace{7pt}
  \begin{remark}
  While it was difficult to deductively predict the form of the differential formula (\ref{df-p1}), we could derive the general form of the formula based on the results for some special orders $\omega$ obtained from the Erd\'{e}lyi-Kober type fractional derivatives. Then, as long as we can guess the form of this general formula, we can show that the formula holds by following the well-known proof (see \cite{Watson}, p45) of the differential formula (\ref{df-1}) by using the series representation of the Bessel functions\vspace{-5pt}
  \begin{equation*}\label{J-series}
    J_{\omega}(r)=\sum_{k=0}^{\infty}\frac{(-1)^{k}}{k!\Gamma(k+1+\omega)}\Bigl(\frac{r}{2}\Bigr)^{2k+\omega}.
  \end{equation*}
  In fact, from the series representation (\ref{mathcalJ-1})
  \begin{equation*}
      \mathcal{J}_{\omega,\varphi}^{[p]}(r)=\Bigl(\frac{2}{p}\Bigr)^{2+\omega}\frac{\pi}{\Gamma(\frac{1}{p})^{2}}\sum_{k=0}^{\infty}\frac{(-1)^{k}}{k!\Gamma(\frac{2}{p}(k+1)+\omega)}\Bigl(\frac{r}{2}\Bigr)^{2k+\omega}\Phi_{k,\varphi}^{[p]},\vspace{-8pt}
    \end{equation*}
    it is shown as follows.
  \begin{align*}
    \frac{d}{dr}r^{1+(p-1)\omega}\mathcal{J}_{\omega+1,\varphi}^{[p]}(r)&=\Bigl(\frac{2}{p}\Bigr)^{2+\omega+1}\frac{\pi}{\Gamma(\frac{1}{p})^{2}}\frac{d}{dr}\sum_{k=0}^{\infty}\frac{(-1)^{k}}{k!\Gamma(\frac{2}{p}(k+1)+\omega+1)}\Bigl(\frac{r}{2}\Bigr)^{2(k+\frac{p}{2}\omega+1)}2^{1+(p-1)\omega}\Phi_{k,\varphi}^{[p]}\\
  &=\Bigl(\frac{2}{p}\Bigr)^{2+\omega+1}\frac{\pi}{\Gamma(\frac{1}{p})^{2}}\sum_{k=0}^{\infty}\frac{(-1)^{k}2^{1+(p-1)\omega}\cdot2(k+\frac{p}{2}\omega+1)}{k!\Gamma(\frac{2}{p}(k+1)+\omega+1)}\Bigl(\frac{r}{2}\Bigr)^{2(k+\frac{p}{2}\omega)+1}\frac{1}{2}\Phi_{k,\varphi}^{[p]}\\
  &=r^{1+(p-1)\omega}\Bigl(\frac{2}{p}\Bigr)^{2+\omega}\frac{\pi}{\Gamma(\frac{1}{p})^{2}}\sum_{k=0}^{\infty}\frac{(-1)^{k}}{k!}\frac{\frac{2}{p}(k+1)+\omega}{(\frac{2}{p}(k+1)+\omega)\Gamma(\frac{2}{p}(k+1)+\omega)}\Bigl(\frac{r}{2}\Bigr)^{2k+\omega}\Phi_{k,\varphi}^{[p]}\\
  &=r^{1+(p-1)\omega}\Bigl(\frac{2}{p}\Bigr)^{2+\omega}\frac{\pi}{\Gamma(\frac{1}{p})^{2}}\sum_{k=0}^{\infty}\frac{(-1)^{k}}{k!\Gamma(\frac{2}{p}(k+1)+\omega)}\Bigl(\frac{r}{2}\Bigr)^{2k+\omega}\Phi_{k,\varphi}^{[p]}\\
  &=r^{1+(p-1)\omega}\mathcal{J}_{\omega,\varphi}^{[p]}(r).
  \end{align*}
  \end{remark}
  
  \subsection{\normalsize{Proof of Theorem \ref{GHI}}}
  \hspace{13pt}Let $p>0$, a function $F$ be \textit{$p$-radial} (that is, there exists a function on $\mathbb{R}_{\geq0}$ $\phi$ satisfying $F(x)=\phi(|x|_{p})$ in any $x\in\mathbb{R}^{2}$) and be integrable on $\mathbb{R}^{2}$. Then, the generalization for $p$ of \textit{the Hankel transform of order zero} (\cite{K1}, (2.2))
  \begin{equation*}
    \hat{F}(x)=p\Gamma(\frac{1}{p})^{2}\int_{0}^{\infty}J_{0}^{[p]}(2\pi tx)\phi(t)t\ dt\qquad\text{for }x\in\mathbb{R}^{2}
  \end{equation*}
  holds, and it can be expressed as 
  \begin{equation*}
    \sum_{n\in\mathbb{Z}^{2}}F(n)=\sum_{n\in\mathbb{Z}^{2}}\hat{F}(n)=\sum_{n\in\mathbb{Z}^{2}}\Bigl(p\Gamma(\frac{1}{p})^{2}\int_{0}^{\infty}J_{0}^{[p]}(2\pi tn)\phi(t)t\ dt\Bigr),
  \end{equation*}
  from \textit{the Poisson summation formula} (an equality for periodization of integrable functions; \cite{Stein-1971}, p251, Theorem 2.4). In particular, let $F$ be an indicator function on the $p$-circle of radius $r$ (that is, $F(x)=1\ (|x|_{p}<r),\ 0\ (|x|_{p}\geq r)$) and $N_{p}(r)$ be the number of lattice points inside this $p$-circle, then
  \begin{align*}
    N_{p}(r)&=p\Gamma(\frac{1}{p})^{2}\Bigl(\int_{0}^{r}\frac{(\frac{2}{p})^{2}}{\Gamma(\frac{2}{p})}t\ dt+\sum_{n\in\mathbb{Z}^{2}\setminus{\{0\}}}\Bigl(\int_{0}^{r}J_{0}^{[p]}(2\pi tn)t\ dt\Bigr)\Bigr)\\
    &=\frac{2}{p}\frac{\Gamma(\frac{1}{p})^{2}}{\Gamma(\frac{2}{p})}r^{2}+p\Gamma(\frac{1}{p})^{2}\sum_{n\in\mathbb{Z}^{2}\setminus{\{0\}}}\Bigl(\int_{0}^{r}J_{0}^{[p]}(2\pi tn)t\ dt\Bigr)
  \end{align*}
  holds. Note that $J_{0}^{[p]}(0)=(\frac{2}{p})^{2}/\Gamma(\frac{2}{p})$ is used for the first term on the right-hand side. 
  \vspace{5pt}\par
  Now, we consider in particular for $p$ satisfying $(2/p)\in\mathbb{N}$. For the finite set $\mathcal{A}_{s}^{[p]}$(\ref{mathcalA}), that is, the set consisting of distorted angles $\varphi$ corresponding to lattice points on the $p$-circle of radius $s^{1/p}\ (\geq1)$
  \begin{equation*}
    \mathcal{A}_{s}^{[p]}=\{\varphi\in[0,2\pi)\ |\ (\mathrm{sgn}(\cos\varphi)s^{\frac{1}{p}}|\cos\varphi|^{\frac{2}{p}},\mathrm{sgn}(\sin\varphi)s^{\frac{1}{p}}|\sin\varphi|^{\frac{2}{p}})\in\mathbb{Z}^{2}\}
    \end{equation*}
  ($|n_{1}|^{p}+|n_{2}|^{p}=s$, that is, $|n|_{p}=s^{\frac{1}{p}}$), and the functions $g$ and $\mathcal{G}$ satisfying $g(n)=\mathcal{G}_{\varphi}(s^{\frac{1}{p}})$ under this distorted polar coordinate system, the following holds with the counting measure $\mu$. 
  \begin{equation*}
    \hspace{-10pt}\sum_{n\in\mathbb{Z}^{2}\setminus{\{0\}}}g(n)=\lim_{T\to\infty}\sum\nolimits_{1\leq|n|_{p}^{p}\leq T}g(n)=\lim_{T\to\infty}\int_{1}^{T}\sum\nolimits_{\varphi\in\mathcal{A}_{s}^{[p]}}\mathcal{G}_{\varphi}(s^{\frac{1}{p}})d\mu(s)=\int_{1}^{\infty}\sum\nolimits_{\varphi\in\mathcal{A}_{s}^{[p]}}\mathcal{G}_{\varphi}(s^{\frac{1}{p}})d\mu(s).
  \end{equation*}\par
  In addition to the above, from (\ref{P_p}) and the special case of Proposition \ref{df-p} (differential formula), that is, $\int_{0}^{r}\tau\mathcal{J}_{0,\varphi}^{[p]}(\tau)d\tau=r\mathcal{J}_{1,\varphi}^{[p]}(r)$, 
  \begin{align*}
    P_{p}(r)&=p\Gamma(\frac{1}{p})^{2}\sum_{n\in\mathbb{Z}^{2}\setminus{\{0\}}}\Bigl(\int_{0}^{r}J_{0}^{[p]}(2\pi tn)t\ dt\Bigr)\\
    &=p\Gamma(\frac{1}{p})^{2}\int_{1}^{\infty}\sum\nolimits_{\varphi\in\mathcal{A}_{s}^{[p]}}\Bigl(\int_{0}^{r}(2\pi ts^{\frac{1}{p}})\mathcal{J}_{0,\varphi}^{[p]}(2\pi ts^{\frac{1}{p}})dt\Bigr)\frac{d\mu(s)}{2\pi s^{\frac{1}{p}}}\\
    &=p\Gamma(\frac{1}{p})^{2}\int_{1}^{\infty}\sum\nolimits_{\varphi\in\mathcal{A}_{s}^{[p]}}\Bigl(\int_{0}^{2\pi rs^{\frac{1}{p}}}\tau\mathcal{J}_{0,\varphi}^{[p]}(\tau)\frac{d\tau}{2\pi s^{\frac{1}{p}}}\Bigr)\frac{d\mu(s)}{2\pi s^{\frac{1}{p}}}\\
    &=p\Gamma(\frac{1}{p})^{2}\int_{1}^{\infty}\sum\nolimits_{\varphi\in\mathcal{A}_{s}^{[p]}}\Bigl(2\pi rs^{\frac{1}{p}}\mathcal{J}_{1,\varphi}^{[p]}(2\pi s^{\frac{1}{p}}r)\Bigr)\frac{d\mu(s)}{(2\pi s^{\frac{1}{p}})^{2}}
  \end{align*}\vspace{-12pt}\\
  holds. Hence, we obtain the main result formula in conclusion.   
  \begin{equation}\label{HI-p}
    P_{p}(r)=\frac{p\Gamma(\frac{1}{p})^{2}}{2\pi}r\int_{1}^{\infty}\frac{1}{s^{\frac{1}{p}}}\Bigl(\sum\nolimits_{\varphi\in\mathcal{A}_{s}^{[p]}}\mathcal{J}_{1,\varphi}^{[p]}(2\pi s^{\frac{1}{p}}r)\Bigr)d\mu(s). 
  \end{equation}\vspace{5pt}
  \begin{remark}
  In particular, for $p=2$, we define $R(k):=\#\{n\in\mathbb{Z}^{2}|\ |n|^{2}=k\}$ and note $s\in\mathbb{N}$ and $\mathcal{J}_{\omega,\varphi}^{[2]}(r)=J_{\omega}(r)$ (distorted angles invariance), then 
  \begin{equation*}
    P_{2}(r)=r\sum_{k=1}^{\infty}\frac{1}{k^{\frac{1}{2}}}J_{1}(2\pi k^{\frac{1}{2}}r)\Bigl(\sum\nolimits_{\varphi\in\mathcal{A}_{k}^{[2]}}1\Bigr)
    =r\sum_{k=1}^{\infty}\frac{R(k)}{k^{\frac{1}{2}}}J_{1}(2\pi k^{\frac{1}{2}}r)
  \end{equation*}
  can be expressed. Thus, (\ref{HI-p}) is a generalization for $p$ to Hardy's identity (\ref{HI}). 
  \end{remark}
  \section{Concluding remarks}
    \hspace{13pt}In this paper, the generalized formula (\ref{df-p1}), which generalize the differential formula of decreasing order (\ref{df-1}), can be derived by the Erd\'{e}lyi-Kober fractional derivative. On the other hand, as for the derivation of another important formula of Bessel functions, the differential formula of increasing order
    \begin{equation}\label{df-2}
      \frac{d}{dr}\Bigl(\frac{J_{\omega}(r)}{r^{\omega}}\Bigr)=-\frac{J_{\omega+1}(r)}{r^{\omega}}, 
    \end{equation}
    it is found that the well-known proof method via the series representation cannot be applied in general due to the complication of the part $\Phi_{k,\varphi}^{[p]}$ consisting of the distorted angle $\varphi$. In view of the above, we plan to derive a generalized formula of the differential formula (\ref{df-2}) by using the Erd\'{e}lyi-Kober operator, which is strongly related to $J_{\omega}^{[p]},\ \mathcal{J}_{\omega,\varphi}^{[p]}$. Unlike (\ref{df-1}), which consists of integer-order derivatives, even if we succeed in deriving the formula, it will probably be in the form of a fractional-order derivative formula. However, by combining the obtained formula with (\ref{df-1}), it is expected to derive a fractional differential equation whose solution is $\mathcal{J}_{\omega,\varphi}^{[p]}$. \vspace{15pt}\par
    In what follows, we discuss the relevance of the results of this paper to our previous studies. In paper \cite{K1}, as a partial generalization of the method by S. Kuratsubo and E. Nakai in 2022 (see \cite{Kuratsubo-2022}), we defined two functions
  \begin{equation}\label{D2}
    D_{\beta}^{[p]}(s:x):=\frac{1}{\Gamma(\beta+1)}\sum_{|m|_{p}^{p}<s}(s-|m|_{p}^{p})^{\beta}e^{2\pi ix\cdot m},\quad 
    \mathcal{D}_{\beta}^{[p]}(s:x):=\frac{1}{\Gamma(\beta+1)}\int_{|\xi|_{p}^{p}<s}
    (s-|\xi|_{p}^{p})^{\beta}e^{2\pi ix\cdot \xi}d\xi,
  \end{equation} 
 and established the following theorem (see also \cite{K2}, Theorem 3.1).
  \begin{theorem}[\textit{Astroid-type versions of Theorem 1.3 of \cite{K1}}]\label{thm_At}
  \itshape{Let $p>0$ such that $\frac{2}{p}\in\mathbb{N}\setminus{\{2\}}$. If there exists $Q^{[p]}:=\inf_{\omega\geq0}q_{\omega}^{[p]}$ satisfying $J_{\omega}^{[p]}(x)\stackrel{\text{unif}}{=}\mathcal{O}(|x|_{p}^{-q_{\omega}^{[p]}})$ with respect to distorted angles, then the following holds for $\beta>1-Q^{[p]}$.
  \begin{equation}\label{D-D_p}
    D_{\beta}^{[p]}(s:x)-\mathcal{D}_{\beta}^{[p]}(s:x)=s^{\beta+\frac{2}{p}}p^{\beta+1}
    \Gamma(\frac{1}{p})^{2}\sum_{n\in\mathbb{Z}^{2}\setminus\{0\}}\frac{J_{\beta+1}^{[p]}
    (2\pi\sqrt[p]{s}(x-n))}{(2\pi \sqrt[p]{s}|x-n|_{p})^{\beta+1}}
    \quad\text{for }s>0,\ x\in\mathbb{R}^{2}.\vspace{-5pt}
  \end{equation}}
  Furthermore, under this assumption, the series converges absolutely for $x\in\mathbb{T}^{2}$.
  \end{theorem}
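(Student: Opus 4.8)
The plan is to realize both $D_{\beta}^{[p]}$ and $\mathcal{D}_{\beta}^{[p]}$ in (\ref{D2}) as, respectively, a twisted lattice sum and the zeroth Fourier coefficient of a single $p$-radial, compactly supported function, and then to read off their difference from the Poisson summation formula, exactly as in the circle case of \cite{K1}. Concretely, I would set $g_{s}(\xi):=\frac{1}{\Gamma(\beta+1)}(s-|\xi|_{p}^{p})_{+}^{\beta}$, which is $p$-radial with profile $\phi_{s}(t)=\frac{1}{\Gamma(\beta+1)}(s-t^{p})_{+}^{\beta}$ and supported in $\{|\xi|_{p}\leq s^{1/p}\}$. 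With this notation $D_{\beta}^{[p]}(s:x)=\sum_{m\in\mathbb{Z}^{2}}g_{s}(m)e^{2\pi ix\cdot m}$, while $\mathcal{D}_{\beta}^{[p]}(s:x)=\int_{\mathbb{R}^{2}}g_{s}(\xi)e^{2\pi ix\cdot\xi}d\xi=\hat{g_{s}}(-x)$. Applying the Poisson summation formula (\cite{Stein-1971}, p251, Theorem 2.4) to the twisted periodization $y\mapsto g_{s}(y)e^{2\pi ix\cdot y}$, whose Fourier transform is $\hat{g_{s}}(\,\cdot\,-x)$, isolates the $n=0$ term as $\mathcal{D}_{\beta}^{[p]}(s:x)$ and yields
\begin{equation*}
  D_{\beta}^{[p]}(s:x)-\mathcal{D}_{\beta}^{[p]}(s:x)=\sum_{n\in\mathbb{Z}^{2}\setminus\{0\}}\hat{g_{s}}(n-x).
\end{equation*}

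The heart of the argument is then the explicit evaluation of $\hat{g_{s}}$, which I would carry out exactly as in the computation of $N_{p}(r)$ in Section 2.3. Since $g_{s}$ is $p$-radial, the generalized Hankel transform of order zero (\cite{K1}, (2.2)) gives $\hat{g_{s}}(\eta)=\frac{p\Gamma(\frac{1}{p})^{2}}{\Gamma(\beta+1)}\int_{0}^{s^{1/p}}J_{0}^{[p]}(2\pi t\eta)(s-t^{p})^{\beta}t\,dt$. Substituting $t=s^{1/p}\tau$ and writing $x=2\pi s^{1/p}\eta$ turns this into $\frac{p\Gamma(\frac{1}{p})^{2}}{\Gamma(\beta+1)}s^{\beta+\frac{2}{p}}\int_{0}^{1}J_{0}^{[p]}(\tau x)\tau(1-\tau^{p})^{\beta}d\tau$, and the remaining integral is precisely the $\omega=0,\ \gamma=\beta+1$ instance of the integral formula (\ref{omega_J}), namely $\int_{0}^{1}J_{0}^{[p]}(\tau x)\tau(1-\tau^{p})^{\beta}d\tau=\frac{p^{\beta}\Gamma(\beta+1)}{|x|_{p}^{\beta+1}}J_{\beta+1}^{[p]}(x)$. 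After simplification this is exactly the summand $s^{\beta+\frac{2}{p}}p^{\beta+1}\Gamma(\frac{1}{p})^{2}J_{\beta+1}^{[p]}(2\pi s^{1/p}\eta)/(2\pi s^{1/p}|\eta|_{p})^{\beta+1}$ appearing in (\ref{D-D_p}); the replacement of $\eta=n-x$ by $x-n$ is harmless because $J_{\omega}^{[p]}$ is even in each coordinate. The interchange of the $t$-integration with the defining series is licensed by the compact (hence absolute) convergence established in Proposition \ref{J-cc}.

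The step I expect to be the main obstacle is the rigorous justification of the Poisson summation formula, since the cited form requires both $g_{s}$ and $\hat{g_{s}}$ to be continuous with moderate decrease. For $g_{s}$ this is immediate: it is compactly supported and continuous in the stated range (the hypotheses force $\beta>0$), so its decay is trivial. The content is entirely in $\hat{g_{s}}$, and this is precisely where the standing hypothesis enters. The evaluation above exhibits the prefactor $|\eta|_{p}^{-(\beta+1)}$, so the assumed uniform-in-distorted-angle bound $J_{\beta+1}^{[p]}(x)=\mathcal{O}(|x|_{p}^{-q_{\beta+1}^{[p]}})$ yields $\hat{g_{s}}(\eta)=\mathcal{O}(|\eta|_{p}^{-(\beta+1)-q_{\beta+1}^{[p]}})$. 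Because $Q^{[p]}=\inf_{\omega\geq0}q_{\omega}^{[p]}\leq q_{\beta+1}^{[p]}$, the hypothesis $\beta>1-Q^{[p]}$ forces $(\beta+1)+q_{\beta+1}^{[p]}>2$, so the decay exponent strictly exceeds the ambient dimension.

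This single inequality does all the remaining work. On the one hand it verifies the moderate-decrease condition needed to legitimately apply the Poisson summation theorem and thereby the displayed identity. On the other hand, applied again at fixed $x\in\mathbb{T}^{2}$, it gives $\sum_{n\neq0}|\hat{g_{s}}(n-x)|\leq C\sum_{n\neq0}|n-x|_{p}^{-(\beta+1)-q_{\beta+1}^{[p]}}<+\infty$, which is the asserted absolute convergence. The one point requiring genuine care is that $n-x$ sweeps out all directions as $n$ ranges over the lattice, so the decay bound must be invoked uniformly over distorted angles; a merely directional, non-uniform estimate would not control the tail of the series, and this is exactly the uniformity built into the hypothesis on $Q^{[p]}$.
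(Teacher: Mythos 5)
Your proposal is correct and follows essentially the route the paper itself indicates for Theorem \ref{thm_At}: the paper does not reproduce a proof here but imports the result from \cite{K1} and \cite{K2}, describing the method precisely as Poisson summation plus Fourier inversion of the $p$-radial cutoff $(s-|\xi|_{p}^{p})_{+}^{\beta}$, and your reconstruction via the generalized Hankel transform together with the integral formula (\ref{omega_J}) (with $\omega=0$, $\gamma=\beta+1$) and the decay hypothesis $(\beta+1)+q_{\beta+1}^{[p]}>2$ is exactly that argument.
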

  This result was derived using Poisson summation formula and Fourier inversion under the assumed integrability on $\mathbb{R}^{2}$ of a certain function corresponding to $\beta$. 
  We note that, by ($\ref{D2}$) with $\beta=0$, $x=0$ and $s=r^{p}$, the left-hand side of (\ref{D-D_p}) is $P_{p}(r)$, while, in the case $p=2$, the range of $\beta$ for which the theorem holds can be specifically identified as $\beta>\frac{1}{2}$ by asymptotic evaluations of the Bessel functions. Thus, the theorem cannot be applied to $\beta=0$, which means that Hardy's identity is not included in the claim of Theorem \ref{thm_At} at least for $p=2$. \par
  However, if we formally apply Theorem \ref{thm_At} to $\beta=0$ and transform the equation, we obtain the generalized Hardy's identity (\ref{HI-p}). \par
  Hence, the proof of the main result in this paper (Theorem \ref{GHI}) justifies the identity, but the convergence of (\ref{HI-p}) has not yet been confirmed except for the conditional convergence for $p=2$. From the assumptions of Theorem \ref{thm_At}, we can probably infer that (\ref{HI-p}) does not converge absolutely even for the general $p$, but a rigorous solution of this problem requires a uniformly asymptotic evaluation formula of $J_{\omega}^{[p]}$ on $\mathbb{R}^{2}$. \par
  We have already obtained the desired uniformly asymptotic evaluation for the case $\omega=0$ (\cite{K2}, Theorem 1.5)
  \begin{equation*}
      J_{0}^{[p]}(x)\stackrel{\text{unif}}{=}
      \begin{cases}
      \mathcal{O}(|x|_{p}^{-\frac{1}{2}}) & (p=2),\\
      \mathcal{O}(|x|_{p}^{-\frac{p}{2}}) & (\frac{2}{p}\in\mathbb{N}\setminus{\{1,2\}}),
      \end{cases}
      \qquad\text{as }|x|_{p}\to\infty,
  \end{equation*}  
  but for the cases $\omega>0$, it is considered to be difficult to derive the evaluation formulas by real analysis, especially by the method using oscillatory integrals. \par
  On the other hand, if we can derive the asymptotic expansion of $\mathcal{J}_{\omega,\varphi}^{[p]}$ following the complex-analytic method by H. Hankel \cite{Hankel} via analytic continuation of $\mathcal{J}_{\omega,\varphi}^{[p]}$ defined on non-negative real numbers in this paper, we expect to obtain uniformly asymptotic evaluation formulas with respect to $\varphi$ as $\mathcal{J}_{\omega,\varphi}^{[p]}(r)=\mathcal{O}(r^{-q_{\omega}^{[p]}})$. As a byproduct of this trial, in particular from the $\varphi$-invariant $\mathcal{J}_{1,\varphi}^{[p]}(r)=\mathcal{O}(r^{-q_{1}^{[p]}})$, the results of this paper allow us to conjecture
  \begin{equation}
    P_{p}(r)=\mathcal{O}(r^{1-q_{1}^{[p]}+\varepsilon})\qquad\text{for any small }\varepsilon>0,
  \end{equation}
  as the optimal $\mathcal{O}$ evaluation for $p$ satisfying $(2/p)\in\mathbb{N}$, considering the Hardy's conjecture $P_{2}(r)=\mathcal{O}(r^{\frac{1}{2}+\varepsilon})$. 
  
  \section*{Acknowledgements}
  \hspace{13pt}The author is financially supported by JST SPRING, Grant Number JPMJSP2125, and would like to take this opportunity to thank the ``THERS Make New Standards Program for the Next Generation Researchers'' for excellent research conditions during the preparation of this paper. Also, I would like to express my gratitude to Prof. Mitsuru Sugimoto for numerous constructive suggestions and helpful remarks on harmonic analysis. 
  \section*{Data availability}
  \hspace{13pt} No data was used for the research described in the article. 
  \addcontentsline{toc}{section}{References}
   
  \itshape{
  \hspace{13pt}The author's affiliation: Graduate School of Mathematics, Nagoya University, Chikusa-ku, Nagoya 464-8602, Japan\par
  The author's email address: kitajima.masaya.z5@s.mail.nagoya-u.ac.jp}
\end{document}